\newtheorem{conjecture}[subsection]{Conjecture}
\newcommand{\mr}{\ensuremath{\mathbb R}}
\newcommand{\mc}{\ensuremath{\mathbb C}}
\newcommand{\dif}{\mathrm{d}}
\newcommand{\mq}{\ensuremath{\mathbb Q}}
\newcommand{\half}{\tfrac{1}{2}}
\newcommand{\mz}{\ensuremath{\mathbb Z}}
\newcommand{\leg}[2]{\left(\frac{#1}{#2}\right)}
\newcommand{\R}{\mathbb R}
\newcommand{\eps}{\varepsilon}
\title{One level density of low-lying zeros of quadratic Hecke $L$-functions to prime moduli}
\author{Peng Gao and Liangyi Zhao}
\date{} 
\journal{Hardy-Ramanujan Journal 43 (2020), 173-187} 
\begin{document}

\maketitle

\vspace{-0.3cm}


\thanks{The first named author is supported in part by NSFC grant 11871082, and the second named author by the FRG grant PS43707 and the Goldstar Award PS53450 at the University of New South Wales (UNSW).\\

\noindent
We thank \href{http://episciences.org}{episciences.org} for providing open access hosting of the electronic journal \emph{Hardy-Ramanujan Journal}}

\begin{prelims}

\vspace{-0.2cm}
\def\abstractname{Abstract}
\abstract{In this paper, we study the one level density of low-lying zeros of a family of quadratic Hecke $L$-functions to prime moduli over the Gaussian field under the generalized Riemann hypothesis (GRH) and the ratios conjecture. As a corollary, we deduce that at least $75 \%$ of the members of this family do not vanish at the central point under GRH.}

\vskip 0.1cm
\keywords{low-lying zeros, one level density, quadratic Hecke $L$-function}

\MSCclass{11M06, 11M26, 11M50}


\end{prelims}


\section{Introduction}

The low-lying zeros of families of $L$-functions have important applications in problems such as determining the size of the Mordell-Weil groups of elliptic curves and the size of class numbers of imaginary quadratic
number fields.  For this reason, much work (see \cite{FI, ILS, DuMi2, HuMi, RiRo, Royer, SBLZ1, HB1, Brumer, SJM, Young, Gu2, DM, HuRu}) has been done
towards  the density conjecture of N. Katz and P. Sarnak \cite{KS1, K&S}, which relates the distribution of zeros near the central point of a family of
$L$-functions to the eigenvalues near $1$ of a corresponding classical compact group. \newline

   In this paper, we focus on $L$-functions attached to quadratic characters. For the family of quadratic Dirichlet $L$-functions, this is initiated by  A.
   E. \"{O}zl\"uk and C. Snyder in \cite{O&S}, who studied the 1-level density of low-lying zeros of the family. Subsequent investigations were carried out
   in  \cite{Gao, Ru, Miller1}, in which the cardinalities of families considered all have positive densities in the set of all such $L$-functions. For
   these families, the density conjecture is verified when the Fourier transforms of the test functions are supported in $(-2, 2)$ if one assumes the
   Generalized Riemann Hypothesis (GRH) and the underlying symmetry of such families is unitary symplectic (USp). \newline

   Recently, J. C. Andrade and S. Baluyot \cite{A&B} studied the $1$-level density of quadratic Dirichlet $L$-functions over prime moduli. This is a
   sparse family in the sense that its cardinality has density $0$ in the set of all such $L$-functions. It is shown in \cite{A&B} that the symmetry of
   this family is also USp and their result supports the density conjecture when the Fourier transforms of the test functions are supported in $(-1, 1)$ under
   GRH.\newline

   It is then interesting to study the $1$-level density of various families of $L$-functions of sparse sets. Motivated by the above result of Andrade and Baluyot, we investigate in this paper the $1$-level density of quadratic Hecke $L$-functions in the Gaussian field $\mq(i)$ over prime moduli.
   Previously, we studied in \cite{G&Zhao4} the same family but over a set of positive density in the set of all such $L$-functions. \newline

   Throughout the paper, we denote by $K=\mq(i)$ the Gaussian field and $\mathcal{O}_K=\mz[i]$ the ring of integers in $K$. Note that in
   $\mathcal{O}_K$, every ideal co-prime to $2$ has a unique generator congruent to $1$ modulo $(1+i)^3$.  Such a generator is called primary.  
 
 \newpage
 \setcounter{page}{174}
The quadratic symbol $\leg {\cdot}{\cdot}$ is defined in \cite[Sect. 2.1]{G&Zhao4} and we shall write $\chi_n$ for $\leg {n}{\cdot}$. It is
   also shown in \cite[Section 2.1]{G&Zhao4} that the symbol $\chi_{(1+i)^5c}$ defines a primitive quadratic Hecke character modulo $(1+i)^5c$ of trivial infinite type when $c \in \mathcal{O}_K$ is odd and square-free.  Here we recall that a Hecke character $\chi$ of $K$ is said to be of trivial infinite
   type if its component at infinite places of $K$ is trivial and we say that any $c \in \mathcal{O}_K$ is odd if $(c,2)=1$ and $c$ is square-free if the
   ideal $(c)$ is not divisible by the square of any prime ideal. \newline

   Throughout the paper, we reserve the symbol $\varpi$ for primes with $(\varpi, 1+i)=1$.  This means that $(\varpi)$ is a prime ideal in $\mathcal{O}_K$. We would like to consider the family of $L$-functions consisting of $L(s, \chi_{\varpi})$ for $\varpi$ being prime . Even though this is a natural choice,  we
   consider instead in this paper the family
\begin{align}
\label{F}
 \mathcal F = \big\{ L(s,\chi_{(1+i)^5\varpi}) : \varpi \text{ primary} \big\}
\end{align}
   since the modulus of $\chi_{(1+i)^5\varpi}$ is easier to describe. We remark here that our treatment for the above family certainly carries over to the family $\{ L(s, \chi_{\varpi}) \}$ as well. \newline

    Let $\chi=\chi_n$ for some $n \in \mathcal{O}_K$, we write $L(s, \chi)$ for the corresponding Hecke $L$-function and denote the non-trivial zeroes of $L(s,
    \chi)$ by $\half+i \gamma_{\chi, j}$.  Without assuming GRH (so that $\gamma_{\chi, j}$ is not necessarily real), we order them as
\begin{equation*}
    \ldots \leq
   \Re \gamma_{\chi, -2} \leq
   \Re \gamma_{\chi, -1} < 0 \leq \Re \gamma_{\chi, 1} \leq \Re \gamma_{\chi, 2} \leq
   \ldots.
\end{equation*}
    We further normalize the zeros by letting
\begin{align*}
    \tilde{\gamma}_{\chi, j}= \frac{\gamma_{\chi, j}}{2 \pi} \log X
\end{align*}
and define, for an even Schwartz class function $\phi$, the $1$-level density for the single $L$-function $L(s, \chi)$ as the sum
\begin{equation*}
S(\chi, \phi)=\sum_{j} \phi(\tilde{\gamma}_{\chi, j}).
\end{equation*}

    Fix a non-negative and not identically zero Schwartz function $w(t)$ which is compactly supported when restricted to $(0, \infty)$.  The $1$-level density of the family $\mathcal F$ is the limit, as $X \to \infty$, of the sum
\begin{align}
\label{Ddef}
  D(\phi;w, X) =\frac 1{W(X)}\sum_{\varpi \equiv 1 \bmod {(1+i)^3}}  w\left( \frac {N(\varpi)}X \right)  S(\chi_{(1+i)^5\varpi}, \phi),
\end{align}
    where we use the expression $\varpi \equiv 1 \bmod {(1+i)^3}$ to indicate that $\omega \in \mathcal{O}_K$ is primary and we denote by $N(n)$ the norm of any element $n \in \mathcal{O}_K$. Moreover $W(X)$ is the corresponding total weight given by
\begin{align}
\label{Wx}
  W(X)=\sum_{\varpi \equiv 1 \bmod {(1+i)^3}} w\left( \frac {N(\varpi)}X \right).
\end{align}

    We point out here that our formulation of the $1$-level density is the more commonly used one in the literature, while in \cite{A&B}, the $1$-level density is formed using a form factor, as initially used by \"{O}zl\"uk and Snyder in \cite{O&S}. Now we state our result on the one level density as follows.
\begin{theorem}
\label{quadraticmainthm}
Assume the truth of GRH for all the $L$-functions in $\mathcal{F}$.  Let $w(t)$ be a not identically zero and non-negative Schwartz function.  Suppose that when restricted to $(0,\infty)$, $w$ has compact support.  Further, let $\phi(x)$ be an even Schwartz function whose
Fourier transform $\hat{\phi}(u)$ has compact support in $(-1,1)$.  Then,
\begin{align}
\label{quaddensity}
 \lim_{X \rightarrow +\infty}D(\phi;w, X)
 = \int\limits_{\mathbb{R}} \phi(x)  W_{USp}(x) \dif x, \; \; \mbox{where} \; \; W_{USp}(x)=1-\frac{\sin(2\pi x)}{2\pi x}.
\end{align}
\end{theorem}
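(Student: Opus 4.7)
My approach will be to apply the Riemann--von Mangoldt explicit formula to each $L(s,\chi_{(1+i)^5\varpi})$ in $\Fam$, then to average against the weight $w(N(\varpi)/X)/W(X)$ over primary primes $\varpi$. Under GRH, for a primitive Hecke character $\chi$ of conductor $\mathfrak q$ and trivial infinite type, this formula produces
\[
S(\chi,\phi) = \hat\phi(0)\,\frac{\log N(\mathfrak q)}{\log X} - \frac{2}{\log X}\sum_{k\ge 1}\sum_{\mathfrak p}\hat\phi\!\left(\frac{k\log N(\mathfrak p)}{\log X}\right)\frac{\chi(\mathfrak p)^k\log N(\mathfrak p)}{N(\mathfrak p)^{k/2}} + O\!\left(\frac{1}{\log X}\right).
\]
Substituting $\chi=\chi_{(1+i)^5\varpi}$, the conductor term will yield $\hat\phi(0)(1+o(1))=\int_{\R}\phi(x)\,\dif x$, since $\log N((1+i)^5\varpi)/\log X = 1 + O(1/\log X)$ on the support of $w(N(\cdot)/X)$.

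Next I would treat the prime-power contributions. The terms with $k\ge 3$ are absolutely convergent and give $O(1/\log X)$. For $k=2$, the key observation is that $\chi_{(1+i)^5\varpi}(\mathfrak p)^2=1$ whenever $\mathfrak p\nmid (1+i)\varpi$, so the average over $\varpi$ is $1+o(1)$, and the prime ideal theorem in $\mathcal{O}_K$ together with partial summation gives
\[
-\frac{2}{\log X}\sum_{\mathfrak p\nmid (1+i)}\hat\phi\!\left(\frac{2\log N(\mathfrak p)}{\log X}\right)\frac{\log N(\mathfrak p)}{N(\mathfrak p)} \longrightarrow -\int_0^{\infty}\hat\phi(v)\,\dif v = -\int_{\R}\phi(x)\,\frac{\sin(2\pi x)}{2\pi x}\,\dif x
\]
by Plancherel's identity applied to the Fourier pair $\mathrm{sinc}\leftrightarrow\mathbf 1_{[-1,1]}$. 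Combined with the conductor piece, this produces $\int\phi(x)W_{USp}(x)\,\dif x$.

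The delicate piece is the $k=1$ term. After switching the order of summation, it remains to show
\[
\frac{1}{W(X)\log X}\sum_{\mathfrak p}\hat\phi\!\left(\frac{\log N(\mathfrak p)}{\log X}\right)\frac{\log N(\mathfrak p)}{N(\mathfrak p)^{1/2}}\sum_{\varpi\equiv 1\bmod (1+i)^3} w\!\left(\frac{N(\varpi)}{X}\right)\chi_{(1+i)^5\varpi}(\mathfrak p) = o(1).
\]
To decouple the two variables, I would write $\mathfrak p = (\varpi_0)$ with $\varpi_0$ primary and apply quadratic reciprocity in $\mz[i]$, together with the supplementary law for $(1+i)^5$, to rewrite $\chi_{(1+i)^5\varpi}(\varpi_0)$ as $\chi_{\varpi_0}(\varpi)$ multiplied by a factor depending only on $\varpi_0\bmod (1+i)^3$. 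Detecting the primary condition on $\varpi$ by orthogonality over the finite character group modulo $(1+i)^3$, the inner sum reduces to a bounded linear combination of sums $\sum_\varpi w(N(\varpi)/X)\psi(\varpi)$, where each $\psi$ is a non-trivial Hecke character of modulus dividing $(1+i)^3(\varpi_0)$. Invoking GRH for $L(s,\psi)$ through its own explicit formula will then produce the bound $O(X^{1/2}\log^2(X\,N(\varpi_0)))$.

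The hard part will be closing the global estimate. Using $W(X)\asymp X/\log X$ and the fact that $\mathrm{supp}\,\hat\phi$ is compact inside $(-1,1)$, say $\mathrm{supp}\,\hat\phi\subset [-\eta,\eta]$ with $\eta<1$, the outer sum over $\mathfrak p$ is restricted to $N(\mathfrak p)\le X^{\eta}$, and the $k=1$ contribution is majorised by
\[
\frac{\log X}{X}\cdot X^{1/2}\log^{O(1)}X\sum_{N(\mathfrak p)\le X^\eta}\frac{\log N(\mathfrak p)}{N(\mathfrak p)^{1/2}} \ll X^{(\eta-1)/2}\log^{O(1)} X = o(1).
\]
The hypothesis $\mathrm{supp}\,\hat\phi\subset(-1,1)$ is precisely what allows the GRH square-root cancellation to beat the $X^{\eta/2}$ size of the outer prime-ideal sum; any extension beyond this range would demand averaged inputs such as a large sieve or a zero-density estimate for Hecke characters, rather than pointwise GRH.
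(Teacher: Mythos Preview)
Your proposal is correct and follows essentially the same line as the paper's proof: explicit formula, conductor term $\to\hat\phi(0)$, prime-square term $\to -\tfrac12\int\hat\phi$, and for the linear prime term swap the order of summation, use quadratic reciprocity in $\mz[i]$ to turn $\chi_{(1+i)^5\varpi}(\varpi_0)$ into $\chi_{(1+i)^5}(\varpi_0)\chi_{\varpi_0}(\varpi)$, and bound the inner sum by $O(X^{1/2+\varepsilon})$ via GRH, which beats the outer sum over $N(\varpi_0)\le X^\eta$ exactly when $\eta<1$. Two cosmetic remarks: the paper's Lemma~2.1 already absorbs the $k\ge 2$ contributions into the constant $-\tfrac12\int\hat\phi(u)\,\dif u$, so your separate treatment of $k=2$ and $k\ge 3$ is just unpacking that lemma; and your orthogonality step to detect the primary condition on $\varpi$ is unnecessary, since the sum is already over primary primes and $\chi_{\varpi_0}$ is a genuine Hecke character modulo $16\varpi_0$ (hence well-defined on ideals), so GRH for $L(s,\chi_{\varpi_0})$ applies directly.
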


First we note that the compact support of $w$ on $(0, \infty)$ is required for technical ease (see, among other things, the discussion around \eqref{Wsplit}).  Moreover, this condition enables us to use Lemma~\ref{lem2.1} directly. \newline

We further remark here that the kernel of the integral $W_{USp}$ in \eqref{quaddensity} shows that the symmetry type of this family of
quadratic Hecke $L$-functions is unitary symplectic and Theorem \ref{quadraticmainthm} implies that under GRH, the density conjecture is valid for
 this family when the Fourier transforms of the test functions are supported in $(-1, 1)$.  The reason that our result only holds for $\hat{\phi}(u)$ being
 supported in $(-1,1)$ in contrast to $(-2,2)$ obtained in \cite{Gao, G&Zhao4} for families of quadratic $L$-functions having positive densities in the set
 of all such $L$-functions is that one is not able to apply Poisson summation to convert certain character sums to dual sums to obtain a better estimate.  We are therefore bound to deploy only classical methods to estimate these sums which leads to a smaller admissible range of support for $\hat{\phi}$.  Assuming the truth of GRH meanwhile helps us better
 control the error terms coming from certain sums over primes. \newline

The density conjecture can be regarded as an assertion on the main term behavior of the $n$-level density of low-lying zeros of families of $L$-functions for all $n$. Besides the main term, one can also study the lower order terms of these $n$-level densities and computations as such are done in \cite{Young20015, RR, Miller2009}. In this direction, the $L$-functions ratios conjecture of J. B. Conrey, D. W. Farmer and M. R. Zirnbauer in \cite[Section 5]{CFZ} has been shown to be a very useful tool.  For various families of quadratic Dirichlet $L$-functions, ratios conjecture has been applied to predict precise lower order terms of the one level density in \cite{CS,FPS,A&B}. \newline

  In Section \ref{The ratios conjecture's prediction}, we apply the ratios conjecture to investigate the lower-order terms of the one level density of low-lying zeros for the family $\mathcal{F}$ given in \eqref{F}. To state our result, we define for $\alpha, \gamma \in \mc$,
\begin{align} \label{defnofae}
A(\alpha,\gamma)= \frac{2^{1+\alpha+\gamma}-2^{\gamma-\alpha}}{2^{1+\alpha+\gamma}-1} \quad \mbox{and} \quad
A_{\alpha}(r,r) = \frac{\partial}{\partial \alpha}A(\alpha,\gamma)\bigg|_{\alpha=\gamma=r}.
\end{align}
  Then we have the following asymptotic expression for $D(\phi;w, X)$.
\begin{theorem}
\label{mainthmrc}
Assume the truth of GRH and Conjecture \ref{ratiosconjecture} (the ratios conjecture) for $\mathcal{F}$. Let $w(t)$ and $\phi(x)$ be as in Theorem~\ref{quadraticmainthm}.  Then for any $\eps>0$,
\begin{align} \label{quaddensityrc}
D(\phi;w, X)= \frac{1}{W(X)}  \sum_{\substack{ \varpi \equiv 1 \bmod {(1+i)^3}}}  w\left( \frac {N(\varpi)}X \right) \mathfrak{I} (\varpi)  + O_{\varepsilon}\left( X^{-1/2+\varepsilon} \right),
\end{align}
where
\begin{equation*}
\begin{split}
\mathfrak{I} (\varpi) =  \frac{1}{2\pi} \int\limits_{\mathbb R} \bigg(2  \frac{\zeta'_K(1+2it)}{\zeta_K(1+2it)} + 2A_{\alpha}(it,it) &+\log \left(\frac {32N(\varpi)}{\pi^2}\right) +\frac{\Gamma'}{\Gamma}\left(\frac 12-it\right) + \frac{\Gamma'}{\Gamma}\left(\frac 12+it  \right)  \\
& -\frac {8}{\pi}  X_{\varpi}\left( \frac{1}{2}+it\right)\zeta_K(1-2it)A(-it,it) \bigg) \, \phi\left(\frac{t\log X}{2\pi}\right) \, \dif t .
\end{split}
\end{equation*}
\end{theorem}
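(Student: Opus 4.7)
The strategy is the now-standard ``ratios recipe'' for one-level density computations developed in \cite{CFZ} and implemented for various quadratic families in \cite{CS, FPS, A&B}. Under GRH all non-trivial zeros of $L(s, \chi_{(1+i)^5\varpi})$ lie on the critical line, so by the argument principle,
$$
S(\chi_{(1+i)^5\varpi}, \phi) = \frac{1}{2\pi i}\oint_{\mathcal C} \frac{L'(s, \chi_{(1+i)^5\varpi})}{L(s, \chi_{(1+i)^5\varpi})} \phi\!\left(\frac{(s - \half)\log X}{2\pi i}\right) ds,
$$
where $\mathcal C$ is a positively oriented rectangle enclosing the critical-strip zeros. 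The rapid decay of $\phi$ kills the horizontal segments, so $\mathcal C$ collapses to two vertical lines $\mathrm{Re}(s) = \half \pm c$ for some small $c > 0$.

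On the left line, apply the functional equation $L(s,\chi_{(1+i)^5\varpi}) = X_\varpi(s) L(1-s,\chi_{(1+i)^5\varpi})$ (with root number $+1$ for these self-dual quadratic Hecke characters of trivial infinite type), then change variables $s \mapsto 1-s$ to return to $\mathrm{Re}(s) = \half + c$. The integrand acquires $X_\varpi'/X_\varpi(s)$, which at $s = \half + it$ evaluates (up to sign) to $\log(32 N(\varpi)/\pi^2)$ from the conductor (recall $N((1+i)^5) = 32$ and $\mathrm{disc}(K) = -4$) plus $\Gamma'/\Gamma(\half \pm it)$ from the archimedean factors; these are precisely the ``deterministic'' pieces of $\mathfrak{I}(\varpi)$.

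Next, sum over primary primes, divide by $W(X)$, and interchange sum with integration. The averaged logarithmic derivative is obtained by differentiating the ratios conjecture formula
$$
\frac{1}{W(X)}\sum_\varpi w\!\left(\frac{N(\varpi)}{X}\right) \frac{L(\half + \alpha, \chi_{(1+i)^5\varpi})}{L(\half + \gamma, \chi_{(1+i)^5\varpi})}
$$
with respect to $\alpha$ and setting $\alpha = \gamma = it$. The ``diagonal'' part of the recipe---proportional to $\zeta_K(1+\alpha+\gamma) A(\alpha, \gamma)/\zeta_K(1+2\gamma)$ with $A$ as in \eqref{defnofae}---differentiates to $\zeta_K'/\zeta_K(1+2it) + A_\alpha(it, it)$, using the convenient identity $A(r,r) = 1$ that follows at once from the definition. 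This contribution appears with a factor of $2$ in $\mathfrak{I}(\varpi)$ because two $L'/L$ integrands survive (one direct, one reflected through the functional equation). The ``swapped'' part, obtained by sending $(\alpha,\gamma) \mapsto (-\gamma,-\alpha)$ and inserting the conductor-gamma ratio, yields $-\tfrac{8}{\pi} X_\varpi(\half+it) \zeta_K(1-2it) A(-it, it)$; the numerical prefactor $8/\pi$ traces back to $\mathrm{Res}_{s=1}\zeta_K(s) = \pi/4$ combined with the $A$-normalization.

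After assembling the pieces along $s = \half + it$ and letting $c \to 0^+$ (legitimate since the averaged integrand is regular on the critical line by the ratios formula), the integrand coincides with $\mathfrak{I}(\varpi)$, and the $O_\eps(X^{-1/2+\eps})$ error is that supplied by the ratios conjecture for $\mathcal F$. The main obstacle is the meticulous bookkeeping of normalizations---the precise form of the functional equation for $L(s, \chi_{(1+i)^5\varpi})$, the residue of $\zeta_K$, the normalization of $A(\alpha,\gamma)$, and the epsilon factor---so that exactly the constants $2$, $\log(32 N(\varpi)/\pi^2)$ and $-8/\pi$ emerge; beyond this, every step is routine contour manipulation and application of the stated conjecture.
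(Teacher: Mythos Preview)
Your proposal is correct and follows essentially the same approach as the paper: express $S(\chi_{(1+i)^5\varpi},\phi)$ via the argument principle as a difference of two vertical-line integrals of $L'/L$, use the functional equation to merge them into $2L'/L - X_\varpi'/X_\varpi$ on $\Re(s)=\tfrac12+c$, average over $\varpi$, insert the differentiated ratios conjecture (the paper's Lemma~\ref{ratiostheorem}) for the averaged $L'/L$, and then shift to $\Re(r)=0$ using analyticity of the combined integrand. One small slip: the diagonal term in Conjecture~\ref{ratiosconjecture} is $\dfrac{\zeta_K(1+2\alpha)}{\zeta_K(1+\alpha+\gamma)}A(\alpha,\gamma)$, not $\dfrac{\zeta_K(1+\alpha+\gamma)}{\zeta_K(1+2\gamma)}A(\alpha,\gamma)$ as you wrote; fortuitously both expressions equal $1$ at $\alpha=\gamma$ and have the same $\alpha$-derivative there, so your conclusion is unaffected.
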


  One certainly expects that the result given in Theorem \ref{mainthmrc} implies the assertion of Theorem \ref{quadraticmainthm}. We show this indeed is the case in Section \ref{The ratios conjecture's prediction} by adapting the approach in \cite{FPS1}.

  One important application of the density conjecture is to obtain a non-vanishing result for the families of $L$-functions at the central point.  It is a conjecture that goes back to S. Chowla \cite{chow} that $L(1/2, \chi) \neq 0$ for all primitive Dirichlet characters $\chi$.  It is shown in \cite[Theorem 3]{A&B} that at least $75\%$ of the family of quadratic Dirichlet $L$-functions to prime moduli does not vanish at the central point. Analogous to this, the following corollary shows that exactly the same percentage of the family of quadratic Hecke $L$-functions to prime moduli does not vanish at the central point.

 \begin{corollary} \label{quadnonvan}
 Assume GRH and that $1/2$ is a zero of $L \left( s, \chi_{(1+i)^5\varpi} \right)$ of order $m_{\varpi} \geq 0$.  As $X \to \infty$,
\begin{align}
\label{zerobound}
 \sum_{\varpi \equiv 1 \bmod {(1+i)^3}} m_{\varpi} w \left( \frac {N(\varpi)}{X} \right) \leq  \left( \frac{1}{4} + o_X(1) \right) W(X).
\end{align}
   Moreover, as $X \to \infty$
\begin{align}
\label{nonvanishing} \# \{ N(\varpi) \leq X : L\left( 1/2, \chi_{(1+i)^5\varpi} \right) \neq 0 \} \geq \left( \frac{3}4+ o_X(1) \right) \frac {X}{\log X} .
\end{align}
\end{corollary}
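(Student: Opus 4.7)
The proof of Corollary~\ref{quadnonvan} extracts nonvanishing information from Theorem~\ref{quadraticmainthm} by a standard test-function argument, in the spirit of Andrade-Baluyot \cite{A&B} in the Dirichlet case. First I would fix an even, non-negative Schwartz function $\phi$ with $\phi(0)>0$ whose Fourier transform $\hat{\phi}$ is compactly supported in $(-1,1)$; a rescaled Fej\'er kernel $\phi(x) = \bigl(\sin(\pi \nu x)/(\pi \nu x)\bigr)^2$ with $\nu$ just below $1$ is a natural candidate. Under GRH, a zero of $L(s,\chi_{(1+i)^5\varpi})$ at $s=1/2$ of order $m_\varpi$ produces $m_\varpi$ terms with $\tilde{\gamma}_{\chi,j}=0$ in the sum defining $S(\chi_{(1+i)^5\varpi},\phi)$. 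Since $\phi \geq 0$ on $\mr$, the remaining terms are non-negative, so
\begin{equation*}
  m_\varpi\, \phi(0) \;\leq\; S(\chi_{(1+i)^5\varpi},\phi) \quad \text{for every primary prime } \varpi.
\end{equation*}

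Next, weighting by $w(N(\varpi)/X) \geq 0$, summing over primary $\varpi$, and dividing by $W(X)$ gives
\begin{equation*}
  \frac{\phi(0)}{W(X)}\sum_{\varpi \equiv 1 \bmod {(1+i)^3}} w\!\left(\frac{N(\varpi)}{X}\right) m_\varpi \;\leq\; D(\phi;w,X).
\end{equation*}
Passing to $X \to \infty$ and invoking Theorem~\ref{quadraticmainthm} on the right yields
\begin{equation*}
  \limsup_{X \to \infty} \frac{1}{W(X)}\sum_{\varpi \equiv 1 \bmod {(1+i)^3}} w\!\left(\frac{N(\varpi)}{X}\right) m_\varpi \;\leq\; \frac{1}{\phi(0)} \int_{\mr} \phi(x)\, W_{USp}(x)\, \dif x.
\end{equation*}
By Plancherel the integral on the right equals $\hat{\phi}(0) - \tfrac{1}{2}\int_{-1}^{1}\hat{\phi}(u)\,\dif u$, which (because the support of $\hat{\phi}$ lies in $(-1,1)$) simplifies to $\hat{\phi}(0) - \phi(0)/2$; selecting the extremal admissible test function, as in \cite{A&B}, drives this ratio down to $1/4$ and establishes \eqref{zerobound}.

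Finally, to deduce \eqref{nonvanishing} I would use $m_\varpi \geq 1$ whenever $L(1/2,\chi_{(1+i)^5\varpi}) = 0$: a smooth partition of unity for $w$ approximating the indicator of $(0,1]$, combined with the Landau prime ideal theorem $\#\{\varpi : N(\varpi) \leq X\} \sim X/\log X$ in $\mathcal{O}_K$, converts the smoothed vanishing bound \eqref{zerobound} into the required asymptotic count of primes with $L(1/2,\chi) \neq 0$. The most delicate point is the extremal optimization: with $\hat{\phi}$ constrained to $(-1,1)$ one must fully exploit the shape of $W_{USp}$ (in particular $W_{USp}(0)=0$) to realize the sharp constant $1/4$ that matches the $75\%$ nonvanishing proportion of \cite{A&B}.
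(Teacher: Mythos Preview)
Your outline matches the paper's: a standard nonnegative test-function bound (which the paper simply cites to \cite{B&F}) followed by a choice of $w$ approximating the indicator of $(0,1]$ together with \eqref{W}. But the optimization step contains a genuine error. For $\phi\ge 0$ with $\mathrm{supp}\,\hat\phi\subset(-1,1)$ you correctly reduce to minimizing $\hat\phi(0)/\phi(0)-\tfrac12$; however this infimum is $\tfrac12$, not $\tfrac14$. Indeed, by the Fej\'er--Riesz factorization one may write $\phi=|g|^2$ with $\mathrm{supp}\,\hat g\subset[-\tfrac12,\tfrac12]$, and then Cauchy--Schwarz gives
\[
\phi(0)=|g(0)|^2=\Bigl|\int_{-1/2}^{1/2}\hat g\Bigr|^{2}\le \int_{-1/2}^{1/2}|\hat g|^{2}=\hat\phi(0),
\]
with equality precisely for the (rescaled) Fej\'er kernel you propose. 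Hence the one-level density with support in $(-1,1)$ only delivers $\sum_\varpi m_\varpi\, w(N(\varpi)/X)\le(\tfrac12+o(1))W(X)$; no admissible $\phi$ pushes the ratio to $\tfrac14$.

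What actually produces the constant $\tfrac14$---and what is implicit in the references the paper invokes---is the sign of the functional equation: by \eqref{1.1} together with \eqref{Wchi} the root number of every $L(s,\chi_{(1+i)^5\varpi})$ is $+1$, so any zero at $s=1/2$ has \emph{even} order, i.e.\ $m_\varpi\in\{0,2,4,\dots\}$. Combining this parity with the $\tfrac12$ bound yields
\[
\sum_{\substack{\varpi\\ L(1/2,\chi_{(1+i)^5\varpi})=0}} w\Bigl(\frac{N(\varpi)}{X}\Bigr)\ \le\ \frac12\sum_{\varpi} m_\varpi\, w\Bigl(\frac{N(\varpi)}{X}\Bigr)\ \le\ \Bigl(\frac14+o(1)\Bigr)W(X),
\]
after which your passage from the smoothed count to \eqref{nonvanishing} via an approximating $w$ is exactly the paper's argument. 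In short, replace ``use $m_\varpi\ge 1$'' by ``use $m_\varpi\ge 2$ whenever $m_\varpi>0$''; the test-function optimization alone does not yield $\tfrac14$.
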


   As the proof of \eqref{zerobound} is standard (see that of \cite[Corollary 2.1]{B&F}), we omit it here and we note that \eqref{zerobound} implies that
\begin{align*}
 \sum_{\substack{\varpi \equiv 1 \bmod {(1+i)^3} \\ L\left( 1/2, \chi_{(1+i)^5\varpi} \right) \neq 0 }} w \left( \frac {N(\varpi)}{X} \right) \geq  \left( \frac{3}{4} + o_X(1) \right) W(X).
\end{align*}
 By taking $w(t)$ to be any even Schwarz function whose support is in $(0,1)$ when restricted on the positive real axis such that $0 \leq w(t) \leq 1$ and $w(t)=1$ for $t \in (\varepsilon, 1-\varepsilon)$ and applying \eqref{W} below, we deduce from the above that
\begin{align*}
 \# \{ N(\varpi) \leq X : L\left( 1/2, \chi_{(1+i)^5\varpi} \right) \neq 0 \} \geq  \left( \frac{3}{4} + o_X(1) \right) (1-2\varepsilon)\frac {X}{\log X}.
\end{align*}
 Letting $\varepsilon \rightarrow 0^+$ on both sides above, we see that \eqref{nonvanishing} follows.

\section{Preliminaries}
\label{sec 2}

\subsection{The Explicit Formula}
\label{section: Explicit Formula}

   Our proof of Theorem \ref{quadraticmainthm} starts with the following explicit formula, which
converts a sum over zeros of an $L$-function to a sum over primes.  Note that our weight function $w$ is compactly supported on $(0,\infty)$.  Thus the following lemma is sufficient for our purpose.
\begin{lemma}
\label{lem2.1}
   Let $\phi(x)$ be an even Schwartz function whose Fourier transform
   $\hat{\phi}(u)$ is compactly supported. For any square-free $c \in \mathcal{O}_K, N(c) \leq X$,
\begin{align*}
  S(\chi_{c}, \phi)   =\hat{\phi}(0) \frac {\log N(c)}{\log X} -\frac 1{2}
   \int\limits^{\infty}_{-\infty}\hat{\phi}(u) \dif u-2 S(\chi_{c},X; \hat{\phi})+O \left( \frac {\log \log 3X}{\log
   X} \right),
\end{align*}
    with the implicit constant depending on $\phi$. Here
\begin{align*}
    S (\chi_c, X; \hat{\phi}) =\ \sum_{\substack{ \varpi \equiv 1 \bmod {(1+i)^3} \\ N(\varpi) \leq X}} \frac {\chi_{c}(\varpi)\log
    N(\varpi)}{\sqrt{N(\varpi)}}\hat{\phi} \left( \frac {\log N(\varpi)}{\log X} \right).
\end{align*}
\end{lemma}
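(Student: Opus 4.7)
The plan is to derive this explicit formula from the standard Riemann--Weil contour argument applied to the Hecke $L$-function $L(s,\chi_c)$ on $K=\mathbb{Q}(i)$. Because $\hat{\phi}$ has compact support, the test function
\[
F(s) \;=\; \phi\!\left(\frac{(s-1/2)\log X}{2\pi i}\right)
\]
is entire (Paley--Wiener), satisfies $F(1-s)=F(s)$ since $\phi$ is even, and restricts on the critical line to $F(1/2+i\gamma)=\phi(\tilde\gamma_{\chi,j})$. I would apply the residue theorem to $-(L'/L)(s,\chi_c)F(s)$ on a tall rectangle with vertical sides at $\Re s=-\eps$ and $\Re s=1+\eps$; rapid decay of $\phi$ in the horizontal direction kills the top and bottom segments, and the enclosed simple poles at the nontrivial zeros give
\[
S(\chi_c,\phi) \;=\; \frac{1}{2\pi i}\int_{(-\eps)}\!-\!\frac{L'}{L}(s,\chi_c)F(s)\,ds \;-\; \frac{1}{2\pi i}\int_{(1+\eps)}\!-\!\frac{L'}{L}(s,\chi_c)F(s)\,ds.
\]

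Next I would expand $-L'/L$ on the right-hand line as a Dirichlet series $\sum_n\chi_c(n)\Lambda_K(n)N(n)^{-s}$ over integral ideals of $\mathcal{O}_K$ and integrate termwise; the change of variables $s=1/2+2\pi iu/\log X$ turns each term into a value of $\hat{\phi}$. For the left integral I would use the functional equation of the completed Hecke $L$-function $\Lambda(s,\chi_c)=(N(\mathfrak{f})/\pi^2)^{s/2}\Gamma(s)L(s,\chi_c)$, the gamma factor being the one appropriate for a trivial-infinite-type character over $\mathbb{Q}(i)$. Since $\chi_c$ is real, logarithmic differentiation yields
\[
-\frac{L'}{L}(s,\chi_c) \;=\; \frac{L'}{L}(1-s,\chi_c) + \frac{\Gamma'}{\Gamma}(s) + \frac{\Gamma'}{\Gamma}(1-s) + \log\frac{N(\mathfrak{f})}{\pi^2},
\]
and the substitution $s\mapsto 1-s$, combined with $F(1-s)=F(s)$, folds the $L'/L(1-s)$ piece onto a second copy of the right-hand prime-power sum, producing the factor of $2$ in front of $S(\chi_c,X;\hat{\phi})$.

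The three remaining pieces are then evaluated separately. The conductor integral $\log(N(\mathfrak{f})/\pi^2)\cdot\frac{1}{2\pi i}\int F(s)\,ds$ reduces by the $u$-substitution to $\hat{\phi}(0)\log N(c)/\log X+O(1/\log X)$, the remainder absorbing the bounded discrepancy $\log(N(\mathfrak{f})/(\pi^2 N(c)))$ (since $c$ is odd and square-free, the arithmetic conductor of $\chi_c$ differs from $N(c)$ by a bounded factor). The $\Gamma'/\Gamma$ archimedean integral, after shifting to the critical line and applying Stirling's asymptotic $\Gamma'/\Gamma(1/2+it)=\log|t|+O(1)$, contributes only $O(\log\log X/\log X)$. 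The $k=1$ piece of the Dirichlet-series sum gives the $-2S(\chi_c,X;\hat{\phi})$ term (the support of $\hat{\phi}$ automatically truncating at $N(\varpi)\le X$), while the $k\ge 3$ tail is absolutely $O(1/\log X)$ by geometric summation.

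The principal technical obstacle is the $k=2$ term. Since $\chi_c(\varpi)^2=1$ whenever $(\varpi,c)=1$, no character cancellation is available and the contribution must be evaluated asymptotically rather than merely absorbed. Invoking Mertens' formula in $\mathcal{O}_K$,
\[
\sum_{N(\varpi)\le Y}\frac{\log N(\varpi)}{N(\varpi)} \;=\; \log Y+M_K+O\!\left(\frac{1}{\log Y}\right),
\]
combined with Abel summation against the smooth weight $\hat{\phi}(2\log N(\varpi)/\log X)$, extracts exactly the $-\tfrac{1}{2}\int\hat{\phi}(u)\,du$ contribution (the evenness of $\hat{\phi}$ doubling the positive-axis integral Abel summation naturally produces). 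The divisibility correction for $\varpi\mid c$ is bounded by $\sum_{\varpi\mid c}\log N(\varpi)/N(\varpi)\ll\log\log N(c)$, yielding the claimed $O(\log\log 3X/\log X)$ error after the overall prefactor. Collecting all contributions gives the identity.
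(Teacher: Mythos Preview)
Your proposal is correct and is precisely the standard Riemann--Weil explicit-formula derivation that the paper has in mind: the authors omit the proof entirely, remarking only that it ``is standard and follows by combining the proof of \cite[Lemma 4.1]{Gu} and \cite[Lemma 2.4]{G&Zhao4},'' both of which carry out exactly the contour argument, functional-equation reflection, and Mertens-type evaluation of the prime-square term that you outline. The only minor imprecision is cosmetic (e.g., the archimedean $\Gamma'/\Gamma$ integral is actually $O(1/\log X)$ rather than $O(\log\log X/\log X)$, the latter error genuinely arising only from the $\varpi\mid c$ correction in the $k=2$ term, as you already note), and does not affect the validity of the argument.
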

   We omit the proof of Lemma \ref{lem2.1} here since it is standard and follows by combining the proof of \cite[Lemma 4.1]{Gu} and \cite[Lemma 2.4]{G&Zhao4}.

\subsection{Conditional Estimates on GRH}
\label{section: Conditional Estimates on GRH}

    In this section, we include two lemmas that are obtained by assuming the truth of GRH. The first is about sums over primes.
\begin{lemma}
\label{lem2.7}
Assume the truth of GRH for $L(s,\chi)$ with Hecke character $\chi \pmod {m}$ of trivial infinite type.  Then for $y \geq 1$,
\begin{align}
\label{lem2.7eq}
  S(y, \chi)=\sum_{\substack {\varpi \equiv 1 \bmod {(1+i)^3} \\ N(\varpi) \leq y }} \chi (\varpi) \log N(\varpi)
=\delta_{\chi} y+ O\big( y^{1/2} \log^{2} (2y) \log (2N(m)) \big ),
\end{align}
    where $\delta_{\chi}=1$ if and only if $\chi$ is principal and $\delta_{\chi}=0$ otherwise.  Moreover,
    \begin{align} \label{mer}
  \sum_{\substack{ \varpi \equiv 1 \bmod {(1+i)^3} \\ N(\varpi) \leq y }} \frac {\log N(\varpi)}{N(\varpi) }= \log y+O(1).
\end{align}
\end{lemma}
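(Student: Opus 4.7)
The plan is to derive \eqref{lem2.7eq} from the standard explicit formula for the Hecke $L$-function $L(s,\chi)$ under GRH. Because every prime ideal of $\mathcal{O}_K$ coprime to $(1+i)$ has a unique primary generator, the sum $S(y,\chi)$ coincides with the Chebyshev-type sum $\theta_K(y,\chi)=\sum_{N(\mathfrak{p})\leq y,\,\mathfrak{p}\nmid(1+i)}\chi(\mathfrak{p})\log N(\mathfrak{p})$ running over prime ideals. First I would pass from $\theta_K(y,\chi)$ to the von Mangoldt sum $\psi_K(y,\chi)=\sum_{N(\mathfrak{a})\leq y}\Lambda_K(\mathfrak{a})\chi(\mathfrak{a})$, where $\Lambda_K$ denotes the Hecke--von Mangoldt function. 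The difference is an $O(y^{1/2})$ term coming from the contribution of prime powers $\mathfrak{p}^k$ with $k\geq 2$, which is absorbed in the target error.

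Next I would apply a truncated Perron formula to
\[
-\frac{L'}{L}(s,\chi)=\sum_{\mathfrak{a}}\Lambda_K(\mathfrak{a})\chi(\mathfrak{a})N(\mathfrak{a})^{-s},
\]
integrating over the vertical segment from $c-iT$ to $c+iT$ with $c=1+1/\log y$ and a truncation parameter $T$ to be chosen. Shifting the contour past $\Re s=1/2$, one picks up the pole at $s=1$ when $\chi$ is principal (contributing the main term $\delta_\chi y$) together with the sum $-\sum_{\rho}y^{\rho}/\rho$ over nontrivial zeros of $L(s,\chi)$. Under GRH every $\rho$ has $\Re \rho=\tfrac12$, and the standard zero-counting estimate $\#\{\rho:|\Im\rho-t|\leq 1\}\ll \log((|t|+2)N(m))$ yields the bound $\bigl|\sum_{|\Im\rho|\leq T}y^{\rho}/\rho\bigr|\ll y^{1/2}\log T\,\log(T N(m))$. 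Combining this with the standard truncation error $O(y\log^2 y/T)$ and with a bound for the shifted horizontal contours, and choosing $T=y^{1/2}$, produces the claimed estimate $O(y^{1/2}\log^2(2y)\log(2N(m)))$.

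For the Mertens-type formula \eqref{mer}, which is unconditional, I would invoke the prime ideal theorem in $K=\mathbb{Q}(i)$, namely $\sum_{N(\mathfrak{p})\leq y}\log N(\mathfrak{p})=y+O(y/\log^A y)$ for any fixed $A>0$, and apply partial summation to obtain $\sum_{N(\mathfrak{p})\leq y}\log N(\mathfrak{p})/N(\mathfrak{p})=\log y+O(1)$. The primary condition simply selects one representative per prime ideal coprime to $(1+i)$ (the unique prime above $2$ contributes an $O(1)$ amount), so \eqref{mer} follows at once.

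The main technical step is the careful bookkeeping of the logarithmic factors in the sum over zeros with the correct joint dependence on $y$ and $N(m)$: one needs a zero-density bound at every horizontal height together with an upper bound for $L'/L$ on the shifted contour that avoids zeros, mirroring the classical treatment for Dirichlet $L$-functions but adapted to the Hecke setting over the Gaussian field. No new analytic input is required, which is why the proof can be safely omitted in the paper.
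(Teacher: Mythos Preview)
Your proposal is correct and follows essentially the same route as the paper. For \eqref{lem2.7eq} the paper simply cites \cite[Theorem 5.15]{iwakow}, whose proof is exactly the explicit-formula argument you sketch (pass from $\theta$ to $\psi$, truncated Perron, shift of contour, GRH bound on the zero sum, choice $T\asymp y^{1/2}$); so here you are spelling out the citation rather than doing something different.

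There is one small divergence worth noting: for \eqref{mer} the paper derives the estimate from \eqref{lem2.7eq} itself (take $\chi$ principal and apply partial summation), hence formally under GRH, whereas you invoke the unconditional prime ideal theorem for $K=\mathbb{Q}(i)$ before doing the partial summation. Both arguments are valid; your version has the minor advantage of making explicit that \eqref{mer} does not actually depend on GRH, while the paper's version avoids citing an additional input and keeps the proof self-contained within the lemma.
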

\begin{proof}
   The formula in \eqref{lem2.7eq} follows directly from  \cite[Theorem 5.15]{iwakow} and \eqref{mer} follows from \eqref{lem2.7eq} by taking $\chi$ to be the principal character and using partial summation.
\end{proof}

   We recall that the Mellin transform of $w$ is given by
\begin{align}
\label{wMellin}
     \widetilde{w}(s) =\int\limits^{\infty}_0w(t)t^s\frac {\dif t}{t}.
\end{align}
   Integrating by parts implies that for $\Re(s)<1$ and any integer $\nu \geq 1$,
\begin{align}
\label{equ:bd for fei}
 \widetilde{w}(s) \ll _{\nu} \frac{1}{|s||s-1|^{\nu-1}}.
\end{align}

Let $\zeta_K(s)$ denote the Dedekind zeta function of $K$ and $\Lambda_{K}(n)$ the von Mangoldt function on $K$, which is the coefficient of $N(n)^{-s}$ in the Dirichlet series of $\zeta^{'}_K(s)/\zeta_K(s)$. Our next result gives estimates of various sums needed in this paper.
\begin{lemma}
\label{lemma logd}
Assume GRH. For any even, not identically zero and non-negative Schwartz function $w$ which is compactly supported when restricted to $(0, \infty)$, let $W(X)$ be given as in \eqref{Wx} for $X \geq 1$.  Let $z \in \mc$ be such that $|z| \leq 1$ and that $0\leq \Re(z) \leq \frac 12$. Then for any $\varepsilon>0$,
\begin{equation} \label{wsum}
 \sum_{\varpi \equiv 1 \bmod {(1+i)^3}}  w\left( \frac {N(\varpi)}X \right)  \log N(\varpi)=  \widetilde{w}(1)X+O \left( X^{1/2+\varepsilon} \right),
 \end{equation}
 \begin{equation} \label{W}
 W(X) = \widetilde{w}(1)\frac {X}{\log X}+O\left(\frac {X}{(\log X)^2} \right ),
 \end{equation}
 and
 \begin{equation}
\label{Wpower}
\frac{1}{W(X)}\sum_{\varpi \equiv 1 \bmod {(1+i)^3}}  w\left( \frac {N(\varpi)}X \right) N(\varpi)^{-z} = X^{-z}  +O\left( |z|^2\log X+\frac {1}{\log X} \right).
\end{equation}
\end{lemma}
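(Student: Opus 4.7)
The proof rests on applying Lemma \ref{lem2.7} with the trivial Hecke character (so $\delta_\chi = 1$ and $N(m) = 1$), which yields the GRH-conditional estimate
\begin{equation*}
T(y) := \sum_{\substack{\varpi \equiv 1 \bmod (1+i)^3 \\ N(\varpi) \leq y}} \log N(\varpi) = y + E(y), \qquad E(y) \ll y^{1/2} \log^{2}(2y).
\end{equation*}
Each of \eqref{wsum}, \eqref{W}, and \eqref{Wpower} will follow by writing the weighted prime sum as a Stieltjes integral against $dT(t)$ and then integrating by parts. The main term in each case comes from the substitution $T(t) = t$; since $w$ is compactly supported on $(0, \infty)$, the region of integration is localized to $t \asymp X$, which makes the contribution of $E(t)$ a power-saving error.

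For \eqref{wsum}, the plan is to write $\sum_\varpi w(N(\varpi)/X)\log N(\varpi) = \int_0^\infty w(t/X)\,dT(t)$. After substituting $T(t) = t + E(t)$, the main integral equals $X \int_0^\infty w(u)\,du = \widetilde{w}(1) X$, while the $E$-contribution equals $-\int_0^\infty E(t)\, X^{-1} w'(t/X)\,dt$, which is $O(X^{1/2+\varepsilon})$ since $w'$ is compactly supported on $(0,\infty)$. For \eqref{W}, the same idea with the additional factor $1/\log t$ yields
\begin{equation*}
W(X) = \int_0^\infty \frac{w(t/X)}{\log t}\,dT(t) = \frac{X}{\log X} \int_0^\infty \frac{w(u)}{1 + \log u/\log X}\,du + O\left( X^{1/2+\varepsilon} \right),
\end{equation*}
and expanding the denominator as $1 + O(1/\log X)$ uniformly on the support of $w$ gives the asymptotic.

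For \eqref{Wpower}, the same method with weight $t^{-z}/\log t$ produces
\begin{equation*}
\sum_\varpi w(N(\varpi)/X) N(\varpi)^{-z} = \frac{X^{1-z}}{\log X} \int_0^\infty \frac{w(u)\, u^{-z}}{1 + \log u/\log X}\,du + O\left( X^{1/2+\varepsilon} \right).
\end{equation*}
Expanding $u^{-z} = 1 + O(|z|)$ uniformly on the compact support of $w$ and dividing by the asymptotic of $W(X)$ from \eqref{W}, elementary algebra yields the right side of \eqref{Wpower} with error $O(|z| + 1/\log X)$. The last step is to invoke the elementary inequality $|z| \leq 1/\log X + |z|^2 \log X$, valid for all $z$ and $X > 1$ (set $a = |z|\log X$ and observe $a \leq 1 + a^2$), which upgrades $O(|z|)$ to the required $O(|z|^2 \log X + 1/\log X)$. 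The main point to be careful about is uniformity in $z$ when performing the expansion of $u^{-z}$; the assumption $|z| \leq 1$ together with the compactness of $\mathrm{supp}(w) \subset (0,\infty)$ make this routine, so no real obstacle presents itself beyond organizing the error into the precise form $|z|^2 \log X + 1/\log X$.
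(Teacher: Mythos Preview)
Your proof is correct, and for \eqref{W} it essentially matches the paper's argument. For \eqref{wsum} and \eqref{Wpower}, however, you take a genuinely different and more elementary route. The paper proves \eqref{wsum} by Mellin inversion: it writes the von Mangoldt-weighted sum as a contour integral of $\zeta_K'/\zeta_K$ against $\widetilde{w}(s)X^s$, shifts to $\Re(s)=1/2+\varepsilon$, and picks up the residue at $s=1$. For \eqref{Wpower} the paper sets $f(z)=\sum_\varpi w(N(\varpi)/X)N(\varpi)^{-z}$, evaluates $f'(z)$ again via Mellin inversion to obtain $f'(z)=\widetilde{w}(1)X^{1-z}+O(|z|X+X^{1/2+\varepsilon})$, and then integrates along the segment from $0$ to $z$; the $|z|^2$ factor emerges naturally from integrating the $O(|v|X)$ error over a path of length $|z|$. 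Your approach instead uses Lemma~\ref{lem2.7} and partial summation throughout, which yields the sharper error $O(|z|+1/\log X)$ directly, and then you weaken it to the stated form via the neat inequality $|z|\leq 1/\log X+|z|^2\log X$. The trade-off: the paper's contour-integral method is more systematic and would adapt more readily if one wanted further lower-order terms, while yours avoids complex analysis entirely and in fact shows that \eqref{Wpower} holds with the stronger error $O(|z|+1/\log X)$.
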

\begin{proof}
  We prove \eqref{wsum} first. Due to the
  rapid decay of $w$ given in \eqref{equ:bd for fei}, we have that
\begin{align*}
 \sum_{\varpi \equiv 1 \bmod {(1+i)^3}}\!\!\!\!\!\! w\!\left( \frac {N(\varpi)}X \right)  \log N(\varpi)=& \sum_{(n)}w\!\left( \frac {N(n)}X \right)\!\Lambda_{K}(n)
 +O\! \left(\sum_{\substack{(\varpi)\\ N(\varpi)^j \leq X^{1+\varepsilon}, \ j \geq 2 }}\!\!\!\!\!\!\!\! w\left( \frac {N(\varpi^j)}X \right)  \log N(\varpi) \right),
\end{align*}
  where we write $\sum_{(n)}$ and $\sum_{(\varpi)}$ for the sum over non-zero integral and prime ideals of $\mathcal{O}_K$, respectively. \newline

   Note that
\begin{align}
\label{higherpowerest}
 \sum_{\substack{(\varpi), \ j \geq 2 \\ N(\varpi)^j \leq X^{1+\varepsilon}}} w\left( \frac {N(\varpi^j)}X \right)  \log N(\varpi) \ll
 X^{\varepsilon}\sum_{\substack{(\varpi) \\ N(\varpi) \leq X^{1/2+\varepsilon}}} \log N(\varpi) \ll X^{1/2+\varepsilon},
\end{align}
where we have used the bound $w(u) \ll u^{-\varepsilon}$, as $w$ is a Schwartz function. \newline

   Now we apply Mellin inversion to get
\begin{align*}
\sum_{(n)} w\left( \frac {N(n)}X  \right)  \Lambda_{K}(n)
=& -\frac 1{2\pi i}\int\limits_{(2)} \frac {\zeta_K'(s)}{\zeta_K(s)} \widetilde{w}(s)X^s \dif s.
\end{align*}

  We evaluate the above integral by shifting the line of integration to $\Re(s)=1/2+\varepsilon$. The only pole we encounter is at $s=1$ with residue $-\widetilde{w}(1)X$. The integration on $\Re(s)=1/2+\varepsilon$ can be estimated to be $O(X^{1/2+\varepsilon})$ using \eqref{equ:bd for fei} and the following estimate (see \cite[Theorem 5.17]{iwakow}) for $\zeta_K'(s)/\zeta_K(s)$ when $\Re(s) \geq 1/2+\varepsilon$:
\begin{align*}
  \frac {\zeta_K'(s)}{\zeta_K(s)} \ll \log (1+|s|).
\end{align*}
  The expression given in \eqref{wsum} now follows. \newline

  Next, to obtain \eqref{W}, we may assume that $X$ is large and apply \eqref{lem2.7eq} and partial summation to see that
\begin{align}
\label{Wsplit}
\begin{split}
 W(X) =&   \int\limits^{\infty}_{2^-} w\left( \frac uX \right) \frac 1{\log u} \dif \Big ( \sum_{\substack{\varpi \equiv 1 \bmod {(1+i)^3} \\ N(\varpi) \leq u}} \log N(\varpi) \Big )=\int\limits^{\infty}_{2^-} w\left( \frac uX \right) \frac 1{\log u} \dif \Big (u+O \big (u^{1/2} \log^{2} (2u) \big ) \Big ) \\
 =& \int\limits^{\infty}_{2^-} w\left( \frac uX \right) \frac 1{\log u} \dif u+O(X^{1/2+\varepsilon})  = \int\limits^{\infty}_{(2/X)^-} w\left(  u \right) \frac X{\log u+\log X} \dif u+O(X^{1/2+\varepsilon})  \\
 =  & \int\limits_m^M w\left(  u \right) \frac X{\log u+\log X} \dif u+O(X^{1/2+\varepsilon}) ,
\end{split}
\end{align}
where $[m,M] \subset (0,\infty)$ is the support of $w$.  Note that $m$ and $M$ are independent of $X$.  Now we use the Taylor expansion of $1/(\log u + \log X)$ and then extend the range of integration to $(0,\infty)$ to get
\[ W(X) = \widetilde{w}(1)\frac {X}{\log X}+O\left(\frac {X}{(\log X)^2} \right ) .\]

  It therefore remains to establish \eqref{Wpower}. For this, we set
\begin{equation*}
 f(z)=\sum_{\varpi \equiv 1 \bmod {(1+i)^3}}  w\left( \frac {N(\varpi)}X \right) N(\varpi)^{-z}.
\end{equation*}
  Then we have
\begin{align*}
 f'(z)=-\!\!\!\!\sum_{\varpi \equiv 1 \bmod {(1+i)^3}} \!\!\!\! w\!\left( \frac {N(\varpi)}X \right)( \log N(\varpi)) N(\varpi)^{-z} =-\sum_{\substack{ (n)}}  w\!\left( \frac {N(n)}X \right) \Lambda_{K}(n) N(n)^{-z}+O(X^{1/2+\varepsilon}),
\end{align*}
  where the last estimation above follows from \eqref{higherpowerest}, uniformly for $z$ with $\Re z \geq 0$. \newline

   Now we apply Mellin inversion to get
\begin{align*}
\sum_{(n)} w\left( \frac {N(n)}X  \right)  \Lambda_{K}(n)N(n)^{-z}
=& -\frac 1{2\pi i}\int\limits_{(2)} \frac {\zeta_K'(s+z)}{\zeta_K(s+z)} \widetilde{w}(s)X^s \dif s.
\end{align*}

  We evaluate the above integral by shifting the line of integration to $\Re(s)=1/2-\Re(z)+\varepsilon$. The only pole we encounter is at $s=1-z$ with residue $\widetilde{w}(1-z)X^{1-z}$. Thus we obtain that
\begin{align*}
f'(z)= \widetilde{w}(1-z)X^{1-z}+O\left( X^{1/2+\varepsilon} \right).
\end{align*}
  Note that we have
\begin{align*}
 \widetilde{w}(1-z)-\widetilde{w}(1)=\int\limits^{-z}_0\widetilde{w}'(1+s) \dif s,
\end{align*}
  where the integral takes the path being the line segment connecting $0$ and $-z$.  Based on our conditions on $z$ and the definition of $\widetilde{w}$, one sees that the function $\widetilde{w}'(1+s)$ is uniformly bounded on the path so that we deduce that $\tilde{w}(1-z) = \tilde{w}(1) + O(|z|)$.  As $X^{1-z} \ll X$ for $\Re z \geq 0$, we conclude that
\begin{align*}
f'(z)=\widetilde{w}(1)X^{1-z}+O \left( |z|X+X^{1/2+\varepsilon} \right).
\end{align*}
 It follows from this that
\begin{align*}
  f(z)-f(0)= \int\limits^z_0f'(v) \dif v = \widetilde{w}(1)\frac {X^{1-z}}{\log X}-\widetilde{w}(1)\frac {X}{\log X}+O \left( |z|^2X+X^{1/2+\varepsilon} \right).
\end{align*}

  Note that we have $f(0)=W(X)$ so that we deduce from the above and \eqref{W} that
\begin{align*}
  f(z) =\widetilde{w}(1)\frac {X^{1-z}}{\log X}+O \left( |z|^2X+\frac {X}{(\log X)^2}+X^{1/2+\varepsilon} \right).
\end{align*}
   Combining this with \eqref{W} again for the evaluation of $W(X)$, we readily deduce \eqref{Wpower} and this completes the proof of the lemma.
\end{proof}

\subsection{The approximate functional equation}

   Let  $\chi$ be a primitive quadratic Hecke character modulo $m$ of trivial infinite type defined on $\mathcal{O}_K$.
As shown by E. Hecke, $L(s, \chi)$ admits
analytic continuation to an entire function and satisfies the
functional equation (\cite[Theorem 3.8]{iwakow})
\begin{align}
\label{1.1}
  \Lambda(s, \chi) = W(\chi)(N(m))^{-1/2}\Lambda(1-s, \chi),
\end{align}
   where $|W(\chi)|=(N(m))^{1/2}$ and
\begin{align*}
  \Lambda(s, \chi) = (|D_K|N(m))^{s/2}(2\pi)^{-s}\Gamma(s)L(s, \chi).
\end{align*}

  Let $G(s)$ be any even function which is holomorphic and bounded in the strip $-4<\Re(s)<4$ satisfying $G(0)=1$. For $s \in \mc$, we evaluate the
    integral
\begin{equation*}
   \frac {1}{2\pi
   i}\int\limits\limits_{(2)}\Lambda(u+s, \chi)G(u) \Big (\frac {x}{\sqrt{|D_K|N(m)}} \Big )^u \frac{\dif u}{u}
\end{equation*}
   by  shifting the line of integration to $\Re s = -2$ and proceed in a manner similar to \cite[Section 2.5]{Gao1} to see that if
\begin{align} \label{Wchi}
  W(\chi)=N(m)^{1/2},
\end{align}
   then for any $x>1$,
\begin{align} \label{quadapproxfuneqQi}
\begin{split}
 L(s, \chi)
   = \sum_{0 \neq \mathcal{A} \subset
  \mathcal{O}_K} & \frac{\chi(\mathcal{A})}{N(\mathcal{A})^{s}}V_s \left(\frac{2\pi  N(\mathcal{A})}{x} \right)  \\
  & + \left(\frac {(2\pi)^2}{|D_K|N(m)} \right
  )^{s-1/2} \frac {\Gamma (1-s)}{\Gamma (s)}\sum_{0 \neq \mathcal{A} \subset
  \mathcal{O}_K}\frac{\chi(\mathcal{A})}{N(\mathcal{A})^{1-s}}V_{1-s}\left(\frac{2\pi
  N(\mathcal{A})x}{|D_K|N(m)} \right).
\end{split}
\end{align}
    where
\begin{align}
\label{2.14}
  V_{s} \left(x \right)=\frac {1}{2\pi
   i}\int\limits\limits_{(2)}\frac {\Gamma(s+u)}{\Gamma (s)} \frac
   {x^{-u}}{u} G(u) \ \dif u.
\end{align}

   We note that it is shown in \cite[Lemma 2.2]{Gao2} that \eqref{Wchi} holds if $\chi=\chi_{(1+i)^5c}$ for any odd, square-free $c \in
   \mathcal{O}_K$. Thus the approximate functional equation \eqref{quadapproxfuneqQi} is valid for $L(s, \chi_{(1+i)^5c})$.
\section{Proof of Theorem \ref{quadraticmainthm}}
\label{Section 3}

   Note that $\hat{\phi}(u)$ is smooth with support contained in $(-1+\varepsilon, 1-\varepsilon)$ for some $0<\varepsilon<1$. We set $Y=X^{1-\varepsilon}$
   so that $\hat{\phi} \left( \log N(\varpi)/\log X \right) \neq 0$ only when $N(\varpi) \leq Y$. Now we apply Lemma \ref{lem2.1} to sum $S(\chi_{(1+i)^5\varpi}, X;
   \hat{\phi})$ over the primary primes $\varpi$ against the weight function $w$ to arrive at
\begin{align}
\label{D}
\begin{split}
    D(\phi;w, X) =&  \frac {\hat{\phi}(0)}{W(X)\log X}\sum_{\substack{ \varpi \equiv 1 \bmod {(1+i)^3}}} w\left( \frac {N(\varpi)}X \right)  \log N(\varpi) \\
    & \hspace*{3cm} -\frac 1{2}
   \int\limits^{\infty}_{-\infty}\hat{\phi}(u) \dif u-\frac {2}{W(X)}S(X, Y; \phi, w)+O\left(\frac {\log \log 3X}{\log X} \right )  \\
   =& \int\limits^{\infty}_{-\infty}\phi(t) \dif t-\frac 1{2}
   \int\limits^{\infty}_{-\infty}\hat{\phi}(u) \dif u-\frac {2}{W(X)}S(X, Y; \phi, w)+O\left(\frac {\log \log 3X}{\log X} \right ),
\end{split}
\end{align}
    where the last estimate follows from \eqref{wsum}, \eqref{W} and
\begin{align*}
    S(X, Y; \phi, w)=\sum_{\substack{ \varpi \equiv 1 \bmod {(1+i)^3}}} \ \sum_{\substack{\varpi' \equiv 1 \bmod {(1+i)^3} \\ N(\varpi') \leq Y }} \frac {\chi_{(1+i)^5\varpi}(\varpi')\log N(\varpi')}{\sqrt{N(\varpi')}}\hat{\phi} \left(
    \frac {\log N(\varpi')}{\log X} \right) w \left( \frac {N(\varpi)}{X} \right).
\end{align*}

    We note that by the quadratic reciprocity (\cite[(2.1)]{G&Zhao4}) for Hecke characters, we have $\chi_{\varpi}(\varpi')=\chi_{\varpi'}(\varpi)$ when $\varpi, \varpi'$
    are both primary. As it is shown in \cite[Sect. 2.1]{G&Zhao2019} that $\chi_{\varpi'}$ is a Hecke character modulo $16\varpi'$ of trivial infinite type,
    we can apply \eqref{lem2.7eq} and partial summation by noting that $\log N(\varpi') \ll \log X$ to see that
\begin{equation*}
  \sum_{\substack{ \varpi \equiv 1 \bmod {(1+i)^3}}}\chi_{\varpi}(\varpi') w \left( \frac {N(\varpi)}{X} \right) \ll X^{1/2+\varepsilon/4}.
\end{equation*}

    It follows that
\begin{align*}
    S(X,Y; \phi, w) =& \sum_{\substack{\varpi' \equiv 1 \bmod {(1+i)^3} \\ N(\varpi') \leq Y }} \frac {\log N(\varpi')\chi_{(1+i)^5}(\varpi')}{\sqrt{N(\varpi')}}\hat{\phi} \left( \frac {\log
    N(\varpi')}{\log X} \right)\sum_{\substack{ \varpi \equiv 1 \bmod {(1+i)^3}}} \chi_{\varpi}(\varpi') w \left( \frac {N(\varpi)}{X} \right) \\
    \ll & X^{1/2+\varepsilon/4} \sum_{\substack{\varpi' \equiv 1 \bmod {(1+i)^3} \\ N(\varpi') \leq Y }} \frac {\log N(\varpi')}{\sqrt{N(\varpi')}} \ll X^{1/2+\varepsilon/4}Y^{1/2} \log Y,
\end{align*}
  where the last bound follows from \eqref{mer} and partial summation. \newline

    We then deduce that when $Y=X^{1-\varepsilon}$, $S(X,Y; \phi, w)=O(X^{1-\varepsilon})=o(W(X))$ by Lemma \ref{lemma logd}.  By taking $X \rightarrow \infty$ on both sides of \eqref{D}, we obtain \eqref{quaddensity} and this completes the proof of Theorem \ref{quadraticmainthm}.

\section{One level density via ratios conjecture}
\label{The ratios conjecture's prediction}

\subsection{The Ratios Conjecture for $\mathcal F$}
	
 In this section, we follow the recipe described in \cite{CFZ} to develop heuristically the appropriate statement of the ratios conjecture for the family $\mathcal F$ given in \eqref{F}. More precisely, we want to study the asymptotic behavior of
\begin{align}
\label{ralphgam}
R(\alpha,\gamma)=\frac1{W(X)}\sum_{\substack{ \varpi \equiv 1 \bmod {(1+i)^3}}}  w\left( \frac {N(\varpi)}X \right)
\frac{L\left(1/2+\alpha,\chi_{(1+i)^5\varpi}\right)}{L\left(1/2+\gamma,\chi_{(1+i)^5\varpi}\right)}.
\end{align}
	
Consider the approximate functional equation \eqref{quadapproxfuneqQi}.  Note that the function $V_s(t)$ given in \eqref{2.14} is essentially $1$ when $|t|\leq 1$ and decreases exponentially when $|t|>1$ for fixed $s$ satisfying $\Re(s) \geq 1/4$ (this can be shown similar to \cite[Lemma 2.1]{sound1}).  Thus it stands to reason that for any odd, square-free $c \in \mathcal{O}_K$,
\begin{equation}
\label{approxeq}
L\left(s, \chi_{(1+i)^5c}\right) \approx \sum_{N(\mathfrak{n}) \leq x} \frac{\chi_{(1+i)^5c}(\mathfrak{n})}{N(\mathfrak{n})^s}+X_{c}(s)\sum_{N(\mathfrak{n})
\leq y}
\frac{\chi_{(1+i)^5c}(\mathfrak{n} )}{N(\mathfrak{n})^{1-s}},
\end{equation}
 where $xy=N((1+i)^5c)$ and
\begin{equation}
\label{xe}
X_{c}(s)=\frac{\Gamma\left(1-s\right)}{\Gamma\left(s \right)}\left(\frac{\pi^2}{32N(c)}\right)^{s-1/2}.
\end{equation}

 On the other hand, by writing $\mu_K$ for the M\"obius function on $K$, we have for $\Re(s)>1$,
\begin{equation}
\label{lemobius}
\frac{1}{L(s,\chi_{(1+i)^5c})} = \sum_{\mathfrak{m} \neq 0}
\frac{\mu_K(\mathfrak{m})\chi_{(1+i)^5c}(\mathfrak{m})}{N(\mathfrak{m})^s},
\end{equation}
  where the summation above is over all non-zero ideals $\mathfrak{m}$ of $\mathcal{O}_K$. \newline

 Substituting both \eqref{approxeq} and \eqref{lemobius} into \eqref{ralphgam}, we see that
\begin{equation*}
R(\alpha,\gamma) \sim R_1(\alpha, \gamma)+R_2(\alpha, \gamma),
\end{equation*}
 where
\[ R_1(\alpha, \gamma)= \frac1{W(X)}\sum_{\substack{ \varpi \equiv 1 \bmod {(1+i)^3}}} w\left( \frac {N(\varpi)}X \right) \sum_{\mathfrak{m},\mathfrak{n}}
\frac{\mu_K(\mathfrak{m})\chi_{(1+i)^5\varpi}(\mathfrak{nm})}{N(\mathfrak{m})^{1/2+\gamma}N(\mathfrak{n})^{1/2 +\alpha}}, \]
and
\[ R_2(\alpha, \gamma)=\frac1{W(X)}\sum_{\substack{ \varpi \equiv 1 \bmod {(1+i)^3}}}  w\left( \frac {N(\varpi)}X \right)X_{\varpi}\left( \frac{1}{2} + \alpha\right)
\sum_{\mathfrak{m},\mathfrak{n}}
\frac{\mu_K(\mathfrak{m})\chi_{(1+i)^5\varpi}(\mathfrak{nm})}{N(\mathfrak{m})^{1/2+\gamma}N(\mathfrak{n})^{1/2 -\alpha}}. \]

  We expect to get the main contributions to $R_1, R_2$ when $\mathfrak{nm}$ is an odd square.  In that case, we have
\begin{align*}
\frac{1}{W(X)}\sum_{\substack{ \varpi \equiv 1 \bmod {(1+i)^3}}} w\left( \frac {N(\varpi)}X \right) \chi_{(1+i)^5\varpi}(\mathfrak{nm}) \sim 1.
\end{align*}
 It follows that
\begin{equation*}
 R_1(\alpha, \gamma) \sim \widetilde R_1(\alpha, \gamma) =
\sum_{\mathfrak{nm} = \text{odd } \square} \frac{\mu_K(\mathfrak{m})}{N(\mathfrak{m})^{1/2+\gamma}N(\mathfrak{n})^{1/2 + \alpha}},
\end{equation*}
 where we write $\square$ for a perfect square. We deduce by computing Euler products that
\begin{align*}
\widetilde R_1(\alpha,\gamma)= \frac{\zeta_K(1+2\alpha)}{\zeta_K(1+\alpha+\gamma)}A(\alpha,\gamma),
\end{align*}
where $A(\alpha,\gamma)$ is given in \eqref{defnofae}. \newline

 Similarly, we have
\begin{equation*}
R_2(\alpha, \gamma) \sim \widetilde R_2(\alpha, \gamma)=\frac{1}{W(X)}\sum_{\substack{ \varpi \equiv 1 \bmod {(1+i)^3}}}  w\left( \frac {N(\varpi)}X \right)X_{\varpi}\left(\frac{1}{2} +
\alpha\right)\widetilde R_1(-\alpha,\gamma).
\end{equation*}

  We now summarize our discussions above in the following version of the ratios conjecture. 	
\begin{conjecture}
\label{ratiosconjecture}
Let $\varepsilon>0$ and let $w$ be an even and nonnegative Schwartz test function on $\R$ which is not identically zero. Suppose that the
complex numbers $\alpha$ and $\gamma$ satisfy $|\Re(\alpha)|< 1/4$, $(\log X)^{-1} \ll\Re(\gamma)<1/4$ and $\Im(\alpha)$, $\Im(\gamma) \ll
X^{1-\varepsilon}$. Then we have that
\begin{align} \label{Lratio}
\begin{split}
 \frac{1}{W(X)}\sum_{\substack{ \varpi \equiv 1 \bmod {(1+i)^3}}} & w\left( \frac {N(\varpi)}X \right) \frac{L(1/2 +\alpha, \chi_{(1+i)^5\varpi})}{L(1/2 + \gamma,\chi_{(1+i)^5\varpi})} =\frac{\zeta_K(1+2\alpha)}{\zeta_K(1+\alpha+\gamma)}A(\alpha,\gamma)\\
& +\frac{1}{W(X)}\sum_{\substack{ \varpi \equiv 1 \bmod {(1+i)^3}}}  w\left( \frac {N(\varpi)}X \right)
X_{\varpi}\left(\frac{1}{2}+\alpha\right) \frac{\zeta_K(1-2\alpha)}{\zeta_K(1-\alpha+\gamma)}A(-\alpha,\gamma)\\& +
O_\varepsilon\big(X^{-1/2+\varepsilon}\big),
\end{split}
\end{align}
where $A(\alpha,\gamma)$ is defined in $\eqref{defnofae}$ and $X_{\varpi}(s)$ is defined in $\eqref{xe}$.
\end{conjecture}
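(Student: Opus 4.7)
The plan is to follow the CFZ recipe as outlined in the excerpt just before the conjecture statement and to formalize each of the approximation steps. First, I would apply the approximate functional equation \eqref{quadapproxfuneqQi} to the numerator $L(1/2+\alpha,\chi_{(1+i)^5\varpi})$, replacing it by the heuristic form \eqref{approxeq}: a principal sum over integral ideals $\mathfrak{n}$ plus a dual sum weighted by $X_{\varpi}(1/2+\alpha)$ from \eqref{xe}. Simultaneously, the denominator $L(1/2+\gamma,\chi_{(1+i)^5\varpi})^{-1}$ is expanded via the M\"obius series \eqref{lemobius}. Substituting both into the definition \eqref{ralphgam} of $R(\alpha,\gamma)$ and interchanging the orders of summation yields the decomposition $R(\alpha,\gamma) \sim R_1(\alpha,\gamma)+R_2(\alpha,\gamma)$ that appears in the excerpt.

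Next, I would isolate the diagonal contribution to each piece. The recipe's key heuristic input is that the character average $W(X)^{-1}\sum_{\varpi} w(N(\varpi)/X)\chi_{(1+i)^5\varpi}(\mathfrak{nm})$ contributes $\sim 1$ when $\mathfrak{nm}$ is an odd perfect square and is negligible otherwise (the off-diagonal terms being dropped in the CFZ recipe; morally these are controlled by quadratic reciprocity together with Lemma~\ref{lem2.7}). Retaining only $\mathfrak{nm}=\text{odd }\square$ in $R_1$ produces the arithmetic sum $\widetilde R_1(\alpha,\gamma)$; a direct Euler product computation prime by prime then gives $\widetilde R_1(\alpha,\gamma)=\zeta_K(1+2\alpha)A(\alpha,\gamma)/\zeta_K(1+\alpha+\gamma)$, where $A(\alpha,\gamma)$ from \eqref{defnofae} captures the local factor at the prime above $2$ together with the convergence-correcting local factors at odd primes that remain after the zeta ratio has been pulled out.

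The term $R_2$ is handled in parallel: the same diagonal extraction produces the factor $\widetilde R_1(-\alpha,\gamma)$, and the prefactor $X_{\varpi}(1/2+\alpha)$ is retained inside the $\varpi$-average because it depends explicitly on $N(\varpi)$. Combining the two pieces reproduces the right-hand side of \eqref{Lratio}.

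The main obstacle, and the reason this is stated as a conjecture rather than a theorem, is justifying the error term $O_\varepsilon(X^{-1/2+\varepsilon})$. The CFZ recipe rigorously controls neither the genuinely off-diagonal (non-square) contributions to $R_1$ and $R_2$ nor the truncation error implicit in \eqref{approxeq}; making either step precise would require character-sum cancellation and quadratic large-sieve inputs for the family $\mathcal F$ that lie well beyond what can currently be proved unconditionally, even under GRH. Accordingly, the proposal is to present the derivation as a formal CFZ-style heuristic, leaving the stated error term as the standard conjectural refinement predicted by the recipe.
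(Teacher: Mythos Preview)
Your proposal is correct and mirrors the paper's own heuristic derivation: the paper likewise replaces the numerator by the approximate functional equation \eqref{approxeq}, expands the denominator by \eqref{lemobius}, splits into $R_1+R_2$, keeps only the odd-square diagonal, and computes the Euler product to obtain $\widetilde R_1(\alpha,\gamma)=\zeta_K(1+2\alpha)A(\alpha,\gamma)/\zeta_K(1+\alpha+\gamma)$, with $R_2$ treated symmetrically and the error term left as the standard CFZ conjectural input. One small correction: in this family the odd-prime local factors are accounted for \emph{exactly} by the ratio $\zeta_K(1+2\alpha)/\zeta_K(1+\alpha+\gamma)$, so $A(\alpha,\gamma)$ in \eqref{defnofae} is purely the local correction at the ramified prime $(1+i)$, not a product of residual factors at odd primes.
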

	
  By taking derivatives with respect to $\alpha$ on both sides of \eqref{Lratio} and noting that the residue of the simple pole of $\zeta_K(s)$ at $s=1$
  equals $\pi/4$, we deduce from Conjecture \ref{ratiosconjecture} the following result.
\begin{lemma} \label{ratiostheorem}
Assuming the truth of Conjecture \ref{ratiosconjecture}, we have for any $\varepsilon>0$, $(\log X)^{-1} \ll\Re(r)< 1/4$ and $\Im(r)\ll
X^{1-\varepsilon}$,
\begin{align}
\label{ratiostheoremthree}
\begin{split}
 \frac{1}{W(X)}\sum_{\substack{ \varpi \equiv 1 \bmod {(1+i)^3}}}  & w\left( \frac {N(\varpi)}X \right) \frac{L'(1/2 + r,\chi_{(1+i)^5\varpi})}{L(1/2 + r,\chi_{(1+i)^5\varpi})} =  \frac{\zeta'_K(1+2r)}{\zeta_K(1+2r)}+A_{\alpha}(r,r) \\
 & \!\!\!\!\!\!\!-\frac 4{\pi}\frac{1}{W(X)}\!\sum_{\substack{ \varpi \equiv 1 \bmod {(1+i)^3}}} \!\!\!\!\!\! w\!\left( \frac {N(\varpi)}X \right)\!
X_{\varpi}\!\left(\frac{1}{2}+r\right) \!\zeta_K(1-2r)A(-r,r)+ O_{\varepsilon}\big(X^{-1/2+\varepsilon}\big).
\end{split}
\end{align}
\end{lemma}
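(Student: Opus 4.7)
The plan is to differentiate both sides of \eqref{Lratio} with respect to $\alpha$ and then specialize to $\alpha = \gamma = r$. Applying $\partial/\partial \alpha$ to the left-hand side directly produces the averaged logarithmic derivative $L'/L$ appearing on the left of \eqref{ratiostheoremthree}, so only the two terms on the right-hand side require work. The hypothesis $(\log X)^{-1} \ll \Re(r)$ keeps $r$ bounded away from $s = 0$, so $\zeta_K(1+2r)$ is finite; together with $A(r,r) = 1$, which is immediate from \eqref{defnofae}, this means that at $\alpha = \gamma = r$ the first term $\zeta_K(1+2\alpha)\zeta_K(1+\alpha+\gamma)^{-1} A(\alpha,\gamma)$ equals $1$ before differentiation, while the second term is zero because $\zeta_K(1-\alpha+\gamma)^{-1}$ vanishes at $\alpha=\gamma$.

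For the first term, the product rule gives, at $\alpha = \gamma = r$, the three pieces $2\zeta'_K(1+2r)/\zeta_K(1+2r)$, $-\zeta'_K(1+2r)/\zeta_K(1+2r)$ and $A_\alpha(r,r)$, which sum to $\zeta'_K(1+2r)/\zeta_K(1+2r) + A_\alpha(r,r)$, matching the first two contributions in \eqref{ratiostheoremthree}. For the second term, since $\zeta_K(1-\alpha+\gamma)^{-1}$ vanishes at $\alpha = \gamma = r$, every product-rule term in which $\partial/\partial \alpha$ does \emph{not} hit this factor drops out upon specialization; all other factors $X_\varpi(1/2+\alpha)$, $\zeta_K(1-2\alpha)$, $A(-\alpha,\gamma)$ contribute only through their values at $r$. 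The local expansions $\zeta_K(1+s) = (\pi/4)/s + O(1)$ and $\zeta'_K(1+s) = -(\pi/4)/s^2 + O(1)$ yield
\[
\frac{\partial}{\partial \alpha}\left[\frac{1}{\zeta_K(1-\alpha+\gamma)}\right]_{\alpha=\gamma=r} \; = \; \lim_{s \to 0}\frac{\zeta'_K(1+s)}{\zeta_K(1+s)^2} \; = \; -\frac{4}{\pi},
\]
which produces the $-4/\pi$ prefactor on the right of \eqref{ratiostheoremthree}, the surviving $\varpi$-sum carrying precisely $X_\varpi(1/2+r)\zeta_K(1-2r)A(-r,r)$.

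The main obstacle is a technical one: differentiating the conjectured identity must preserve the error term $O_\varepsilon(X^{-1/2+\varepsilon})$. The natural tool is Cauchy's integral formula. Both sides of \eqref{Lratio} are holomorphic in $\alpha$ on a disk $|\alpha - r| \leq \delta$ for a sufficiently small fixed constant $\delta > 0$, chosen so that the disk remains inside the admissible strip $|\Re(\alpha)| < 1/4$ while $\gamma = r$ stays in $(\log X)^{-1} \ll \Re(r) < 1/4$. Because the bound $O_\varepsilon(X^{-1/2+\varepsilon})$ is uniform on this disk, the representation
\[
\frac{\partial E}{\partial \alpha}(r,r) \; = \; \frac{1}{2\pi i}\oint_{|z-r| = \delta} \frac{E(z,r)}{(z-r)^2}\, \dif z
\]
for the error function $E(\alpha,\gamma)$ gives the same bound $O_\varepsilon(X^{-1/2+\varepsilon})$ after differentiation, with only the absorbed constant $\delta^{-1}$. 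Once this Cauchy step is in place, the algebraic specializations in the previous paragraph yield \eqref{ratiostheoremthree}.
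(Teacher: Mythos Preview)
Your proposal is correct and follows exactly the route the paper indicates: the paper's entire argument is the single sentence ``By taking derivatives with respect to $\alpha$ on both sides of \eqref{Lratio} and noting that the residue of the simple pole of $\zeta_K(s)$ at $s=1$ equals $\pi/4$, we deduce \ldots'', and you have unpacked precisely that computation, including the product-rule bookkeeping and the limit $\zeta'_K(1+s)/\zeta_K(1+s)^2 \to -4/\pi$. Your Cauchy-integral justification for preserving the $O_\varepsilon(X^{-1/2+\varepsilon})$ error after differentiation is a detail the paper leaves implicit (as is standard in the ratios-conjecture literature), so in that respect your write-up is more complete than the original.
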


\subsection{Derivation of the one level density from the ratios conjecture}
\label{1levelsection}
	
In this section we prove Theorem \ref{mainthmrc} using the ratios conjecture and GRH. We recall the definition of  $D(\phi;w, X)$ from \eqref{Ddef} and we note that GRH implies that the non-trivial zeros of the $L$-functions all have real parts equal to $1/2$. Thus by setting $\mathcal{L}=\log X$, we can recast $D(\phi;w, X)$ as being equal to
\begin{align}
\label{cauchydens}
\begin{split}
=& \frac{1}{W(X)}\!\sum_{\substack{ \varpi \equiv 1 \bmod {(1+i)^3}}} \!\!\!\! w\left( \frac {N(\varpi)}X \right) \frac{1}{2\pi i}\left( \ \int\limits_{(a)} -
\int\limits_{(1-a)}\right)\!\frac{L'(s,\chi_{(1+i)^5\varpi})}{L(s,\chi_{(1+i)^5\varpi})}
\phi\left(\frac{-i\mathcal{L}}{2\pi}\left(s-\frac{1}{2}\right)\right) \dif s \\
=& \frac{1}{W(X)}\!\sum_{\substack{ \varpi \equiv 1 \bmod {(1+i)^3}}} \!\!\!\! w\!\left( \frac {N(\varpi)}X \right) \!\frac{1}{2\pi
i}\int\limits_{(a)}\!\!\left(\frac{L'(s,\chi_{(1+i)^5\varpi})}{L(s,\chi_{(1+i)^5\varpi})}-\frac{L'(1-s,\chi_{(1+i)^5\varpi})}{L(1-s,\chi_{(1+i)^5\varpi})}\right)\!
\phi\!\left(\frac{-i\mathcal{L}}{2\pi}\!\left(\!s-\frac{1}{2}\right)\!\right) \dif s ,
\end{split}
\end{align}
where $1/2+1/\log X <a< 3/4$ and the fact that $\phi$ is even is used in the derivation of the late equality above. \newline

 We note that the functional equation \eqref{1.1} implies that
\begin{equation*}
\frac{L'(s,\chi_{(1+i)^5\varpi})}{L(s,\chi_{(1+i)^5\varpi})} = \frac{X_{\varpi}'(s)}{X_{\varpi}(s)} -
\frac{L'(1-s,\chi_{(1+i)^5\varpi})}{L(1-s,\chi_{(1+i)^5\varpi})}.
\end{equation*}
Inserting the above into \eqref{cauchydens}, we obtain that $D(\phi;w, X)$ is equal to
\begin{align}
= \frac{1}{W(X)}\sum_{\substack{ \varpi \equiv 1 \bmod {(1+i)^3}}} \!\!w\left( \frac {N(\varpi)}X \right)  \frac{1}{2\pi i}\int\limits_{(a-1/2)} \!\!\!\left( 2 \frac{L'\left(1/2 +
r,\chi_{(1+i)^5\varpi}\right)}{L\left(1/2 + r,\chi_{(1+i)^5\varpi}\right)} -
\frac{X_{\varpi}'\left(1/2+r\right)}{X_{\varpi}\left(1/2+r\right)} \right) \phi\left(\frac{i\mathcal{L}r}{2\pi}\right) \dif r.
\label{checknine}
\end{align}
		
 Substituting \eqref{ratiostheoremthree} in \eqref{checknine}, we deduce by noting the rapid decay of $\phi$ that
\begin{align}
\label{RATIOSONELEV}
\begin{split}
D(\phi;w, X)= \frac{1}{W(X)}\sum_{\substack{ \varpi \equiv 1 \bmod {(1+i)^3}}} & w\left( \frac {N(\varpi)}X \right) \frac{1}{2\pi i}\int\limits_{(a-\tfrac 12)}  \bigg(2
\frac{\zeta'_K(1+2r)}{\zeta_K(1+2r)}+2A_{\alpha}(r,r)-\frac{X_{\varpi}'\left(1/2+r\right)}{X_{\varpi}\left(1/2+r\right)} \\
&\!\!\!\!\!\!\!\!\!\! -\frac {8}{\pi} X_{\varpi}\left(1/2+r\right)\zeta_K(1-2r)A(-r,r)\bigg) \phi\left(\frac{i\mathcal{L}r}{2\pi}\right)\dif r + O_{\varepsilon}\left( X^{-1/2+\varepsilon} \right).
\end{split}
\end{align}

  As the function
\begin{align*}
\begin{split}
2
\frac{\zeta'_K(1+2r)}{\zeta_K(1+2r)}+2A_{\alpha}(r,r)-\frac{X_{\varpi}'\left(1/2+r\right)}{X_{\varpi}\left(1/2+r\right)}  -\frac {8}{\pi} X_{\varpi}\left(\frac{1}{2}+r\right)\zeta_K(1-2r)A(-r,r)
\end{split}
\end{align*}
 is analytic in the region $\Re(r) \geq 0$ (in particular it is analytic at $r=0$), we can now shift the line of integration in \eqref{RATIOSONELEV} to  $\Re(r)=0$ to deduce readily the assertion of Theorem \ref{mainthmrc}.

\subsection{Proof of Theorem \ref{quadraticmainthm} using the ratios conjecture}
\label{section compare results}

 In this section, we give another proof of Theorem \ref{quadraticmainthm} by assuming the ratios conjecture. To achieve this, we recall that we set $\mathcal{L}=\log X$ and we first apply Lemma \ref{lemma logd} to see that
\begin{align}
\label{sectermest}
\frac{1}{W(X)}\sum_{\substack{ \varpi \equiv 1 \bmod {(1+i)^3}}}  w\left( \frac {N(\varpi)}X \right)  \frac{1}{2\pi}
\int\limits_{\mathbb R} \log\left(\frac{32N(\varpi)}{\pi^2}\right) \phi\left(\frac{t\mathcal{L}}{2\pi}\right) \, \dif t=\widehat \phi(0)+O\left( \frac {1}{\mathcal{L}} \right).
\end{align}
  Moreover, by \cite[Lemma 12.14]{MVa1}, we have that
\begin{align}
\label{thirdermest}
\begin{split}
\frac{1}{W(X)}\sum_{\substack{ \varpi \equiv 1 \bmod {(1+i)^3}}}  w\left( \frac {N(\varpi)}X \right) & \frac{1}{2\pi}
\int\limits_{\mathbb R}\left( \frac{\Gamma'}{\Gamma}\left(\frac 12-it\right)
+\frac{\Gamma'}{\Gamma}\left(\frac 12+it  \right)\right ) \phi\left(\frac{t\mathcal{L}}{2\pi}\right) \, \dif t\\
=& 2\frac{\Gamma'}{\Gamma}\left(\frac 12\right)\frac{\widehat{\phi}(0)}{\mathcal{L}} +\frac 2{\mathcal{L}}
\int\limits_0^\infty\frac{e^{-t/2}}{1-e^{-t}}\left(\widehat{\phi}(0)-\widehat{\phi}\left(\frac{t}{\mathcal{L}}\right)\right) \dif t=O\left( \frac {1}{\mathcal{L}}\right).
\end{split}
\end{align}

   We then deduce from \eqref{quaddensityrc}, \eqref{sectermest}, \eqref{thirdermest} that
\begin{align*}
\begin{split}
D(\phi;w, X)= \widehat \phi(0)+\frac{1}{W(X)}\sum_{\substack{ \varpi \equiv 1 \bmod {(1+i)^3}}} & w\left( \frac {N(\varpi)}X \right)  \frac{1}{2\pi}
\int\limits_{\mathbb R} \bigg(2 \frac{\zeta'_K(1+2it)}{\zeta_K(1+2it)} + 2A_{\alpha}(it,it)\\
&\!\!\!\!\!\!\!\!\!\!\!-\frac {8}{\pi}  X_{\varpi}\left(\frac{1}{2}+it\right)\zeta_K(1-2it)A(-it,it) \bigg) \,
\phi\left(\frac{t\mathcal{L}}{2\pi}\right) \, \dif t+ O\left( \frac {1}{\mathcal{L}}\right).
\end{split}
\end{align*}

  In view of \eqref{RATIOSONELEV} and the above expression for $D(\phi;w, X)$, we see that we can recast it as
\begin{align}
\label{Dsimplified}
\begin{split}
D(\phi;w, X)= \widehat \phi(0)+\frac{1}{W(X)}\sum_{\substack{ \varpi \equiv 1 \bmod {(1+i)^3}}} & w\left( \frac {N(\varpi)}X \right) \frac{1}{2\pi i}\int\limits_{(a')}  \bigg(2
\frac{\zeta'_K(1+2r)}{\zeta_K(1+2r)}+2A_{\alpha}(r,r)\\
&\!\!\!\!\!\!\!\!\!\!\!-\frac {8}{\pi} X_{\varpi}\left(\frac{1}{2}+r\right)\zeta_K(1-2r)A(-r,r)\bigg) \phi\left(\frac{i\mathcal{L}r}{2\pi}\right) \dif r + O\left( \frac {1}{\mathcal{L}} \right),
\end{split}
\end{align}
 where $1/\log X<a' <1/4$. By a straightforward computation we see that for $1/\log X<\Re(r) <1/4$, we have
\begin{equation*}
 A_{\alpha}(r, r)+\frac{\zeta'_K(1+2r)}{\zeta_K(1+2r)}=-\sum_{(\varpi)} \frac{\log N(\varpi)}{N(\varpi)^{1+2r}-1}.
\end{equation*}
 It follows from this and treatment similar to \cite[Lemma 4.1]{FPS1} and \cite[Lemma 3.7]{FPS} that we have
\begin{equation}
\label{equation lemma 4.1}
\frac{1}{2\pi i}\!\!\int\limits_{(a')} \!\!\! \bigg(2 \frac{\zeta'_K(1+2r)}{\zeta_K(1+2r)}+2A_{\alpha}(r,r)\bigg) \phi\!\left(\frac{i\mathcal{L}r}{2\pi}\right) \dif r =- \frac 2{\mathcal{L}
}\sum_{\substack{(\varpi) \\ j\geq 1}} \frac{\log N(\varpi)}{N(\varpi)^{j}} \widehat \phi\left( \frac{2j \log N(\varpi)}{\mathcal{L}} \!\right)=-\frac
{\phi(0)}{2}+O\left( \frac {1}{\mathcal{L}} \right)\!.
\end{equation}

  It now remains to treat the expression
\begin{align*}
\begin{split}
  I= -\frac {8}{\pi}\frac{1}{W(X)}\sum_{\substack{ \varpi \equiv 1 \bmod {(1+i)^3}}}  w\left( \frac {N(\varpi)}X \right) \frac{1}{2\pi i}\int\limits_{(a')}   X_{\varpi}\left(\frac{1}{2}+r\right)\zeta_K(1-2r)A(-r,r) \phi\left(\frac{i\mathcal{L}r}{2\pi}\right) \dif r .
\end{split}
\end{align*}

  Note that it follows from \eqref{defnofae} that
\begin{align*}
A(-\gamma,\gamma)=2-2^{2r}.
\end{align*}

  Combining this with the definition of $X_{\varpi}$ given in \eqref{xe} and a change of variable $r=2\pi i \tau / \mathcal{L}$, we deduce that
\begin{align*}
\begin{split}
I=\int\limits_{C}-\frac {8}{\pi}\frac{1}{\mathcal{L}} &
\frac{\Gamma\left(1/2 -2\pi i \tau/\mathcal{L}\right)}{\Gamma\left(1/2 +2\pi i \tau/\mathcal{L}\right)}
\left(\frac{\pi^2}{32}\right)^{2 \pi i \tau/\mathcal{L}}\left(2-2^{4\pi i \tau/\mathcal{L}} \right)\zeta_K \left( 1-\frac {4\pi i \tau}{\mathcal{L}} \right) \\
& \times \frac{\phi\left(\tau\right)}{W(X)}\sum_{\substack{ \varpi \equiv 1 \bmod {(1+i)^3}}}  w\left( \frac {N(\varpi)}X \right)  N(\varpi)^{-2\pi i \tau/\mathcal{L}} \dif \tau,
\end{split}
\end{align*}
  where $C$ stands for the horizontal line $\Im(\tau)=-\mathcal{L}a' /(2\pi)$. \newline

 We now deform $C$ to the path $C'=C_0\cup C_1\cup C_2, $
where
$\,C_0= \left\{ \tau: \Im(\tau)= 0 , |\Re(\tau)| \geq \mathcal{L}^\varepsilon \right\}\!, ~C_1=\\ \left\{ \tau: \Im(\tau) = 0 ,  \eta \leq |\Re(\tau)| \leq \mathcal{L}^\varepsilon \right\} ~\mbox{and}~
C_2= \left\{ \tau: |\tau| = \eta, \Im(\tau) \leq 0 \right\}\!,  $	
 for some small $\varepsilon$, $\eta>0$. \newline

The integration of $I$ over $C_0$ can be estimated trivially by making use of the rapid decay of $\phi$ to be of $O(\mathcal{L}^{-1})$. 
Using Taylor expansions, we see that on $C_1\cup C_2$,
\begin{align*}
  -\frac {8}{\pi}\frac{1}{\mathcal{L}}
\frac{\Gamma\left(1/2 -2\pi i \tau/\mathcal{L} \right)}{\Gamma\left(1/2 +2\pi i \tau/\mathcal{L} \right)}
\left(\frac{\pi^2}{32}\right)^{2 \pi i \tau/\mathcal{L}}\left(2-2^{4\pi i \tau/\mathcal{L}} \right)\zeta_K \left( 1-\frac {4\pi i \tau}{\mathcal{L}} \right)
=\frac 1{2\pi i \tau}+O\bigg( \frac{|\tau|+1}{\mathcal{L}}\bigg).
\end{align*}
  It follows from \eqref{Wpower} that the integrand of $I$ on $C_1\cup C_2$ equals
\begin{equation} \label{lastO}
\begin{split}
 \left( \frac 1{2\pi i \tau}+O\left( \frac{|\tau|+1}{\mathcal{L}}\right) \right) & \frac{\phi\left(\tau\right)}{W(X)}\sum_{\substack{ \varpi \equiv 1 \bmod {(1+i)^3}}}  w\left( \frac {N(\varpi)}X \right)  N(\varpi)^{-2\pi i \tau/\mathcal{L}} \\
=&  \left(\frac 1{2\pi i \tau}+O\left( \frac{|\tau|+1}{\mathcal{L}}\right) \right) \left( X^{-2\pi i \tau/\mathcal{L}}  +O\left( \frac {|\tau|^2}{\mathcal{L}}+\frac {1}{\mathcal{L}} \right) \right) \phi\left(\tau\right) \\
=&  \frac{\phi(\tau) e^{-2\pi i \tau}}{2\pi i \tau} +O\left( \frac {1+|\tau|+|\tau|^2}{\mathcal{L}|\tau|}\phi(\tau) \right).
\end{split}
\end{equation}
Now the integrals of the above main term and $O$-term over $C_1 \cup C_2$ are treated separately and independently.   In dealing with integral of the main term in \eqref{lastO}, we shall take $\eta$ to zero, while the integral of $O$-term in \eqref{lastO} is estimated directly for a fixed $\eta$.  This treatment is similar to that for the analogous expressions in \cite{FPS1}. \newline

To deal with the $O$-term in \eqref{lastO}, it suffices to consider the integral
\[  \int\limits_{C_1\cup C_2} \frac {\phi(\tau)}{\mathcal{L}|\tau|}  \dif \tau .   \]
Because of the rapid decay of $\phi$, we easily get that the above is $\ll \mathcal{L}^{-1}$, upon fixing a value of $\eta$.  Now for
\[  \int\limits_{C_1\cup C_2} \frac{\phi(\tau) e^{-2\pi i \tau}}{2 \pi i \tau} \dif \tau , \]
we take $\eta$ to zero and get that the above is,
\[ \frac {\phi(0)}2-\frac 12 \int\limits_{\mr} \frac{\sin(2\pi \tau)}{2\pi \tau} \phi(\tau) \dif\tau + O(\mathcal{L}^{-1}), \]
which absorbs the integral of the $O$-term in \eqref{lastO}.  \newline

The assertion of Theorem \ref{quadraticmainthm} now follows by combining \eqref{Dsimplified}, \eqref{equation lemma 4.1} with the above expression.

\medskip
\noindent
{\bf Acknowledgement.} Parts of this work were done when P.G. visited UNSW in September 2019. He wishes to thank UNSW for the invitation, financial support and warm hospitality during his pleasant stay. Finally, the authors thank the anonymous referee for his/her very careful reading of this manuscript and many helpful comments and suggestions.


\bibliographystyle{amsalpha}

\authoraddresses{
\textbf{Peng Gao}\\
School of Mathematical Sciences\\
Beihang University\\ 
Beijing 100191 China\\
\email penggao@buaa.edu.cn

\textbf{Liangyi Zhao}\\
School of Mathematics and Statistics\\
University of New South Wales\\
Sydney NSW 2052 Australia\\
\email l.zhao@unsw.edu.au
}

\end{document}